\newtheorem{thm}{Theorem}[section]
\theoremstyle{definition}
\newtheorem{exmp}{Example}[section]
\begin{document}
\title{\textbf{Total Chromatic Number for Some Classes of Cayley Graphs}}
\author{S. Prajnanaswaroopa, \ J. Geetha, \ K. Somasundaram}

\date{Department of Mathematics, Amrita School of Engineering-Coimbatore\\ Amrita Vishwa Vidyapeetham, India.\\\{s\_prajnanaswaroopa, j\_geetha, s\_ sundaram\}@cb.amrita.edu}

\maketitle

\noindent\textbf{Abstract:} The Total coloring conjecture states that any simple graph $G$ with maximum degree $\Delta$ can be totally colored with at most $\Delta+2$ colors. In this paper, we have obtained the total chromatic number for some classes of Cayley graphs. \\

\noindent \textbf{Keywords:} Cayley graphs; Total coloring; Circulant graphs; Perfect Cayley graphs.
\maketitle
\section{Introduction}

Total coloring of a simple graph is a coloring of the vertices and the edges such that any adjacent vertices or edges or edges and their incident vertices do not receive the same color. The \textit{total chromatic number}  of a graph \textit{G}, denoted by $ \chi''(G) $, is the minimum number of colors that suffice in a total coloring. It is clear that $ \chi''(G) \geq \Delta+1 $, where $\Delta$ is the maximum degree of \textit{G}.  \\

The Total coloring conjecture (TCC) (Behzad \cite{BEZ} and Vizing \cite{VGV} independently proposed) is a famous open problem in graph theory  that states that any simple graph $G$ with maximum degree $\Delta$ can be totally colored with at most $\Delta+2$ colors. The progress on this conjecture had been low in the initial years but several advancements are in progress\cite{GNS}. The graphs that can be totally colored in $\Delta+1$ colors are said to be type I graphs and the graph with the total chromatic number $\Delta+2$ is said to be type II. The other notations used are standard in graph most graph theory literature.\\

A Cayley graph for a group $H$ with symmetric generating set $S\subset H$  is a simple graph with vertices as all the elements of the group; and edges between each two elements of the form $g$ and $gs$, where $g\in H, s\in S$. Note that a symmetric subset implies $s\in S \implies s^{-1}\in S$.The set $S$ is also supposed to not have the identity element of the group. The Circulant graph is a Cayley graph on the group $H=\mathbb{Z}_n$, the cyclic group on $n$ vertices.  The Unitary Cayley graph $U_n$ is defined as the Cayley graph of the group $\mathbb{Z}_n$ with generating set $S=\{i|i<n,\ \ gcd(i,n)=1\}$. Thus, the degree of $U_n$ is $\phi(n)$. Thus, it is also a Circulant graph.\\

\section{Total chromatic number of some classes of Cayley graphs}
It has been proved by J Geetha et al.\cite{GNS} that the Unitary Cayley graphs satisfy the Total Coloring conjecture, that is, their total chromatic number is at most $\Delta+2$, where $\Delta$ be the degree of the graph. The Unitary Cayley graphs $U_n$ for $n$ even is seen to have a better structure. In fact, it is bipartite with the two parts consisting of odd and even numbers less than $n$ respectively. In this case it is proved here  that almost all such graphs have total chromatic number $\Delta +1$, where $\Delta=\phi(n)$ is the degree of the graph.
\begin{thm}
If $n=2^k$, where $k$ is a positive integer, then the total chromatic number of the graph $U_n$, $\chi''(U_n)=\phi(2^k)+2=2^{k-1}+2$
\end{thm}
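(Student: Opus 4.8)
The plan is to first identify the graph $U_{2^k}$ explicitly. Since $n=2^k$, the generating set $S=\{i<n:\gcd(i,n)=1\}$ consists of exactly the odd residues modulo $2^k$, of which there are $\phi(2^k)=2^{k-1}$. Writing $m=2^{k-1}$, I would show that $U_{2^k}\cong K_{m,m}$: the bipartition into even and odd residues is immediate (every generator is odd, so each edge joins an even vertex to an odd one), and for a fixed even vertex $g$ the neighbours $g+s$ range over all $2^{k-1}$ odd residues as $s$ runs over $S$. Hence every even vertex is adjacent to every odd vertex, which is precisely the complete bipartite graph $K_{m,m}$ with $\Delta=m=2^{k-1}$.

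For the upper bound I would invoke the result of Geetha et al. cited above, by which every unitary Cayley graph satisfies the Total Coloring Conjecture; thus $\chi''(U_{2^k})\le\Delta+2=2^{k-1}+2$. It therefore remains to rule out a total coloring with $\Delta+1=m+1$ colors, and this is the heart of the argument.

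Suppose, for contradiction, that $K_{m,m}$ admits a total coloring with $m+1$ colors. Fix any vertex $v$; its $m$ incident edges must receive $m$ distinct colors and $v$ itself a color distinct from all of them, so with only $m+1$ colors available the color of $v$ is forced to be the unique color missing from the edges at $v$. Consequently, for each color $c$, the vertices colored $c$ are exactly the vertices left uncovered by the matching $M_c$ of edges colored $c$. Since $M_c$ is a matching in a bipartite graph it covers equally many vertices on each side, leaving $m-|M_c|$ uncovered vertices in each part. If $|M_c|<m$ there is a $c$-colored vertex in each part, and these two vertices are adjacent in $K_{m,m}$, contradicting properness. Hence $|M_c|=m$, i.e. $M_c$ is a perfect matching, for every color $c$. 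Summing over the $m+1$ colors gives $\sum_c|M_c|=(m+1)m>m^2$, exceeding the $m^2$ edges of $K_{m,m}$, a contradiction. Therefore $\chi''(U_{2^k})\ge m+2$, and combined with the upper bound this yields $\chi''(U_{2^k})=2^{k-1}+2$.

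The main obstacle is the lower bound, and within it the pivotal observation that forces the color of each vertex to coincide with the color absent from its incident edges; once that rigidity is in place, the bipartite-matching parity and the edge count close the argument quickly. I would also note that the reasoning is valid for every $k\ge 1$, including the degenerate case $K_{1,1}$, so that no small cases require separate treatment.
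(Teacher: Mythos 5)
Your proposal is correct, and its skeleton matches the paper's: both identify $U_{2^k}$ as the complete regular bipartite graph $K_{m,m}$ with $m=2^{k-1}$ and conclude it is type II. The difference is in what happens after that identification. The paper stops there: it simply asserts that a complete regular bipartite graph has total chromatic number $\Delta+2$, implicitly appealing to the classical known result that $K_{m,m}$ is type II. You instead prove this fact from scratch, and your argument is the standard (and correct) one: with only $m+1$ colors each vertex is forced to receive the unique color missing from its $m$ incident edges, so the $c$-colored vertices are exactly the vertices missed by the matching $M_c$; bipartite parity puts uncovered vertices on both sides whenever $|M_c|<m$, and completeness makes two such vertices adjacent, forcing every $M_c$ to be a perfect matching; then $(m+1)m>m^2$ edges gives the contradiction. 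What each approach buys: the paper's proof is a one-liner but rests entirely on an uncited external result, while yours is self-contained, covers the degenerate case $K_{1,1}$ explicitly, and makes the lower bound visible rather than borrowed. Your use of the Geetha et al.\ TCC result for the upper bound is legitimate (the paper cites it in the same section), though for $K_{m,m}$ an explicit $(m+2)$-total-coloring is also classical and easy to exhibit directly.
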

\begin{proof}
It is easily seen that $\phi(2^k)=2^{k-1}=\frac{2^k}{2}$. Thus, the graph $U_n$ in this case is a complete regular bipartite graph, whence its total chromatic number is $\chi''(U_n)=\Delta+2=\phi(n)+2=2^{k-1}+2$.
\end{proof}
\begin{thm}
If $n=2^km$ or $p^k$, where $m,k$ are positive integers, $p$ is an odd prime and $m>1$ is odd, then the graph $U_n$ is total colorable with $\Delta+1=\phi(n)+1$ colors.
\end{thm}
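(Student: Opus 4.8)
The plan is to treat the two cases separately, after first reducing each to a well-understood regular graph. For $n=p^k$ with $p$ an odd prime, observe that two residues $u,v\in\mathbb{Z}_n$ are adjacent in $U_n$ exactly when $p\nmid(u-v)$, i.e.\ when $u\not\equiv v\pmod p$; hence $U_{p^k}\cong K_{p\times p^{k-1}}$, the balanced complete $p$-partite graph whose parts are the residue classes modulo $p$, each of size $p^{k-1}$. This graph is regular of even degree $\Delta=(p-1)p^{k-1}=\phi(p^k)$ on an odd number $N=p^k$ of vertices. For $n=2^k m$ with $m>1$ odd I would instead use that $n$ is even, so (as already noted in the text) $U_n$ is bipartite with the two parts being the even and odd residues; it is $\phi(n)$-regular, and since $\phi(n)=2^{k-1}\phi(m)<2^{k-1}m=n/2$ it is \emph{properly} contained in the complete bipartite graph on the same bipartition. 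This last inequality, which is exactly where $m>1$ enters, is what keeps us away from the balanced complete bipartite graphs that are Type~II in Theorem~1.

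For the odd prime power case I would build the $(\Delta+1)$-total colouring from the arithmetic of $\mathbb{Z}_n$. Since $n$ is odd, $2$ is a unit, so colouring vertex $v$ by $2v$ and edge $\{u,v\}$ by $u+v$ (all modulo $n$) is already a proper total colouring: adjacent vertices, incident edges, and an edge together with either endpoint all receive distinct colours because $2$ is invertible. This uses $n$ colours, whereas $\phi(n)+1$ are allowed, so the real content is a colour-reduction step: a colour $c$ fails to appear at a vertex $w$ precisely when $w\equiv c\cdot 2^{-1}\pmod p$ and $w\neq c\cdot 2^{-1}$, and one must identify colours according to a rule respecting the mod-$p$ partition so that no vertex ever receives a merged colour twice, collapsing the palette to exactly $(p-1)p^{k-1}+1$ colours. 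A cleaner route to the same conclusion is to regard $K_{p\times p^{k-1}}$ as the amalgamation of the complete graph $K_p$ (which is Type~I, being of odd order) with each vertex split into $p^{k-1}$ copies, and to pull the total colouring of $K_p$ back through Hilton's amalgamation technique; the evenness of $\Delta$ together with the odd part sizes are exactly the parity conditions that make the pull-back conformable.

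For the bipartite case I would start from König's theorem: a $\phi(n)$-regular bipartite graph is $1$-factorizable, so $E(U_n)=F_1\cup\cdots\cup F_{\phi(n)}$ decomposes into perfect matchings, giving a proper edge colouring with $\Delta$ colours. To promote this to a $(\Delta+1)$-total colouring I would introduce one new colour for the vertices and free up room by recolouring along a single factor: because $U_n$ is strictly contained in $K_{n/2,n/2}$, every vertex has a non-neighbour in the opposite part, and this slack is what lets one recolour a matching's worth of edges and seat a proper vertex colouring into the vacated slots. Carrying this bookkeeping out equivariantly with respect to the cyclic action on $\mathbb{Z}_n$ should yield $\chi''(U_n)=\Delta+1$.

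The main obstacle in both cases is the same, namely producing a colouring that uses \emph{exactly} $\Delta+1$ colours rather than the $n$ colours one gets for free: in the odd case this is the merging/amalgamation step and the verification that it remains proper, while in the even case it is the recolouring that turns a $1$-factorization into a total colouring. The structural fact that drives both steps — and that separates these graphs from the Type~II graph $U_{2^k}$ of Theorem~1 — is that $U_n$ is here never a balanced complete bipartite graph: in the odd case there are an odd number $p\ge 3$ of parts each of size $p^{k-1}$, and in the even case $m>1$ forces strict containment in $K_{n/2,n/2}$.
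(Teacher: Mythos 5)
Your reduction in the odd prime-power case matches the paper's: $U_{p^k}$ is the balanced complete $p$-partite graph $K_{p\times p^{k-1}}$, and at that point the paper simply invokes Bermond's theorem on the total chromatic number of balanced complete multipartite graphs to conclude Type I. You instead propose to re-derive that fact, either by merging colours in the arithmetic $n$-colouring (vertex $v\mapsto 2v$, edge $\{u,v\}\mapsto u+v$) or by Hilton-style amalgamation of a total colouring of $K_p$. Both are plausible routes, but neither is carried out: the merging rule ``so that no vertex ever receives a merged colour twice'' is precisely the hard part, and you give no construction or verification, while the amalgamation route is named but its parity conditions are never checked. Citing Bermond closes this case in one line; as written, your version does not close it.

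The even case contains a genuine flaw, not merely an omission. Your argument uses only three facts: $U_n$ is bipartite, regular, and properly contained in $K_{n/2,n/2}$ (so every vertex has a non-neighbour in the opposite part). That package of hypotheses cannot imply Type I: the cycle $C_8$, i.e.\ the circulant on $\mathbb{Z}_8$ with generating set $\{1,7\}$, is $2$-regular, bipartite, and properly contained in $K_{4,4}$, yet its total chromatic number is $4=\Delta+2$, since a cycle $C_n$ is Type I only when $3$ divides $n$. Hence the claimed ``slack'' coming from a non-neighbour in the opposite part cannot, by itself, allow you to recolour a matching and seat the vertex colours; any correct proof must exploit finer structure of $U_n$. (Note also that ``one new colour for the vertices'' is not even a proper vertex colouring here, since adjacent vertices lie in opposite parts and would both receive that new colour.) The paper's proof does use finer structure: it takes $r$ to be the least prime divisor of $m$, gives a conformable vertex colouring by residues modulo $r$ (each class has even size), explicitly fills the diagonal and $r-1$ super/subdiagonals of the total colour matrix --- that is, it totally colours the subgraph consisting of all vertices together with $\frac{r-1}{2}$ Hamiltonian cycles --- using $r$ colours, verifying via congruences modulo $r$ that edge colours never clash with vertex colours, and then edge-colours the remaining $\frac{\phi(n)-(r-1)}{2}$ Hamiltonian cycles, which are even cycles since the graph is bipartite, with $\phi(n)-r+1$ further colours, for $\phi(n)+1$ colours in all. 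Some explicit decomposition of this kind, or at least an argument using the cyclic structure beyond mere proper containment in $K_{n/2,n/2}$, is what your sketch is missing.
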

\begin{proof}
Case 1: $n=2^km.$

Let the least prime that divides $m$ be $r$. Now, the $n$ number of vertices can be divided into $r$ independent sets or $r$ color classes  with $\frac{n}{r}$ number of vertices in each set. The sets can be written as $\{0,r,2r,\ldots,n-r\},\{1,r+1,\ldots,n-r+1\},\ldots, \{r-1,2r-1,\ldots,n-1\}$. This is seen to be a conformable $r$ vertex coloring as the parity of each independent set is even. \\

Now, consider the total color matrix of the graph. We split the total color matrix into two parts- the first part concerns the total coloring of the subgraph which consists of the vertices and $\frac{r-1}{2}$-hamiltonian cycles. This corresponds to labelling of the diagonal entries and $r-1$ superdiagonals (subdiagonals) of the total color matrix.  Since $n-s$ and $s$  generate the same cycle in the graph,  the entry started at the $j-$th superdiagonal continues to the $n+2-j$-th superdiagonal, where $s$ is any element of the generating set of $G$. This forces that the differences between the entries of the $j$-th and $n+2-j$-th superdiagonal should be $(j-1)\mod r$.\\
Let us consider the pairs of numbers $(2+t,r-t)\pmod r,\,\,t\le\lfloor\frac{r-1}{4}\rfloor$ and $(\frac{r+1}{2}+t+1,\frac{r+1}{2}-t)\pmod r,\,\, t\le\lfloor\frac{r+1}{4}\rfloor-1$. Note that whenever the number is 0, it is considered as $r$.

The conditions put on $t$ ensure that the numbers inside the brackets are all distinct and all the differences between the number pairs from $0$ to $\lfloor\frac{r}{2}\rfloor$ are covered, thereby ensuring that all the numbers from $1$ to $r$ are covered \cite{ALE}. \\  

Using this rule, the $r$ entries (the first $\frac{r-1}{2}+1$ entries and the last $\frac{r-1}{2}$ entries) of the first row is chosen by the following rule: (here $j$ is the column number of the color matrix, $j\le\frac{n}{2}$\\
i) If $j=1$ then the entry in that column is $1$.\\
ii) If $j$ is odd, then the entry at the $j$-th column is $(2+\frac{j-3}{2}) \mod r$. Note that this would force the entry  at the $n+2-j$-th column to be  $(r-\frac{j-3}{2})\mod r$.\\
iii)  If $j$ is even, then the entry at the $j$-th column is $(\frac{r+1}{2}+\frac{j-2}{2}+1)\mod r$. Note that this would force the entry  at the $n+2-j$-th column to be $(\frac{r+1}{2}-\frac{j-2}{2})\mod r$.\\

It is seen that the entries generated by the above rules give us distinct entries by the prior discussion. The first column along with the $\frac{r-1}{2}$ even numbers less than $r$, (corresponding to the odd numbers in $\mathbb{Z}_n$ less than $r$ in the generating set of $G$) are the required column numbers which are the $\frac{r-1}{2}$ starting entries of the superdiagonals.    We  start the diagonal in the pattern $1-2-3-\ldots-r$. The $r-1$ entries of the   superdiagonals start from the first entries determined by the above rules and then follow a similar pattern as in the principal diagonal. Let $a_j$ be the entry at the $j^{th}$ column. The entries of the superdiagonals (corresponding to the colors of the edges) clash with the vertex colors only when $a_j\equiv j\mod r$ or $a_{n+2-j}\equiv (n+2-j)\mod r$. Now, using the above rules, these amount to either $2+\frac{j-3}{2}\equiv j \mod r$ and $(r-\frac{j-3}{2})\equiv (n+2-j) \mod r$, when $j$ is odd; or $(\frac{r+1}{2}+\frac{j-2}{2}+1)\equiv j\pmod r$ and $(\frac{r+1}{2}-\frac{j-2}{2})\equiv(n+2-j)\mod r$. Solving these congruences using the fact that $r|n$, we obtain the only case $j\equiv1\pmod r$, which implies $r|(j-1)$, which cannot happen, because $j<r$.  Hence, the colors assigned to the vertices and its incident edges do not clash. In addition, the colors assigned to the edges do not clash because the initial column numbers $j$ are all less than $r$.  By symmetry,  the subdiagonals are also filled. This fills diagonal elements with the colors corresponding to a conformable $r$ vertex coloring. Also,  the entries corresponding to $\frac{r-1}{2}$   hamiltonian cycles. The colors used up to now is $r$.\\

Now, consider the remaining part of the total color matrix. It consists of edge coloring of $\frac{\phi(n)-(r-1)}{2}$ remaining hamiltonian cycles, which are all even, as the graph is bipartite. Since the bipartite graphs are class I, the remaining edges are colored with $\phi(n)-r+1$ extra colors. Thus, total number of colors used  is $\phi(n)-r+1+r=\phi(n)+1$ colors.\\

Case 2:\\

When $n=p^k$, the resulting unitary Cayley graph is a balanced complete multipartite graph with $p$ parts. Hence, by Bermond's theorem \cite{BER}, it is evidently type I.
\end{proof}
\begin{exmp}
The adjacency matrix of the graph $U_{24}$ is shown in the table \ref{Table a}. \\

\begin{table}[h]
\begin{adjustbox}{width=\columnwidth,center}
\begin{tabular}{|c|c|c|c|c|c|c|c|c|c|c|c|c|c|c|c|c|c|c|c|c|c|c|c|c|}\hline&0&1&2&3&4&5&6&7&8&9&10&11&12&13&14&15&16&17&18&19&20&21&22&23\\\hline
0&0&1&&&&1&&1&&&&1&&1&&&&1&&1&&&&1\\\hline1&1&0&1&&&&1&&1&&&&1&&1&&&&1&&1&&&\\\hline2&&1&0&1&&&&1&&1&&&&1&&1&&&&1&&1&&\\\hline3&&&1&0&1&&&&1&&1&&&&1&&1&&&&1&&1&\\\hline4&&&&1 &0 &1 &&&&1&&1&&&&1&&1&&&&1&&1 \\\hline5&1&&&&1 &0 &1 &&&&1&&1&&&&1&&1&&&&1&\\\hline6&&1&&&&1 &0 &1 &&&&1&&1&&&&1&&1&&&&1\\\hline7&1&&1&&&&1 &0 &1 &&&&1&&1&&&&1&&1&&&\\\hline8&&1&&1&&&&1 &0 &1 &&&&1&&1&&&&1&&1&&\\\hline9&&&1&&1&&&&1 &0 &1 &&&&1&&1&&&&1&&1&\\\hline10&&&&1&&1&&&&1 &0 &1 &&&&1&&1&&&&1&&1\\\hline11&1&&&&1&&1&&&& 1&0 &1 &&&&1&&1&&&&1&\\\hline12&&1&&&&1&&1&&&&1 &0 &1 &&&&1&&1&&&&1\\\hline13&1&&1&&&&1&&1&&&&1 &0 &1 &&&&1&&1&&&\\\hline14&&1&&1&&&&1&&1&&&&1 &0 &1 &&&&1&&1&&\\\hline15&&&1&&1&&&&1&&1&&&&1 &0 &1 &&&&1&&1&\\\hline16&&&&1&&1&&&&1&&1&&&&1 &0 &1 &&&&1&&1\\\hline17&1&&&&1&&1&&&&1&&1&&&&1 &0 &1 &&&&1&\\\hline18&&1&&&&1&&1&&&&1&&1&&&&1 &0 &1 &&&&1\\\hline19&1&&1&&&&1&&1&&&&1&&1&&&&1 &0 &1 &&&\\\hline20&&1&&1&&&&1&&1&&&&1&&1&&&&1 &0 &1 &&\\\hline21&&&1&&1&&&&1&&1&&&&1&&1&&&&1 &0 &1 &\\\hline22&&&&1&&1&&&&1&&1&&&&1&&1&&&&1 &0& 1\\\hline23&1&&&&1&&1&&&&1&&1&&&&1&&1&&&&1&0\\\hline\end{tabular}\end{adjustbox}\caption{Adjacency matrix of $U_{24}$}\label{Table a}\end{table}

\vspace{0.3cm}
\newpage
The first part of the total color matrix is as given in the table \ref{Table b}.\\

\begin{table}[h]
\begin{adjustbox}{width=\columnwidth,center}
\begin{tabular}{|c|c|c|c|c|c|c|c|c|c|c|c|c|c|c|c|c|c|c|c|c|c|c|c|c|}\hline&0&1&2&3&4&5&6&7&8&9&10&11&12&13&14&15&16&17&18&19&20&21&22&23\\\hline
0&1&3&&&&&&&&&&&&&&&&&&&&&&2\\\hline1&3&2&1&&&&&&&&&&&&&&&&&&&&&\\\hline2&&1&3&2&&&&&&&&&&&&&&&&&&&&\\\hline3&&&2&1&3&&&&&&&&&&&&&&&&&&&\\\hline4&&&&3 &2 &1 &&&&&&&&&&&&&&&&&& \\\hline5&&&&&1 &3 &2 &&&&&&&&&&&&&&&&&\\\hline6&&&&&&2 &1 &3 &&&&&&&&&&&&&&&&\\\hline7&&&&&&&3 &2 &1 &&&&&&&&&&&&&&&\\\hline8&&&&&&&&1 &3 &2 &&&&&&&&&&&&&&\\\hline9&&&&&&&&&2 &1 &3 &&&&&&&&&&&&&\\\hline10&&&&&&&&&&3 &2 &1 &&&&&&&&&&&&\\\hline11&&&&&&&&&&& 1&3 &2 &&&&&&&&&&&\\\hline12&&&&&&&&&&&&2 &1 &3 &&&&&&&&&&\\\hline13&&&&&&&&&&&&&3 &2 &1 &&&&&&&&&\\\hline14&&&&&&&&&&&&&&1 &3 &2 &&&&&&&&\\\hline15&&&&&&&&&&&&&&&2 &1 &3 &&&&&&&\\\hline16&&&&&&&&&&&&&&&&3 &2 &1 &&&&&&\\\hline17&&&&&&&&&&&&&&&&&1 &3 &2 &&&&&\\\hline18&&&&&&&&&&&&&&&&&&2 &1 &3 &&&&\\\hline19&&&&&&&&&&&&&&&&&&&3 &2 &1 &&&\\\hline20&&&&&&&&&&&&&&&&&&&&1 &3 &2 &&\\\hline21&&&&&&&&&&&&&&&&&&&&&2 &1 &3 &\\\hline22&&&&&&&&&&&&&&&&&&&&&&3 &2& 1\\\hline23&2&&&&&&&&&&&&&&&&&&&&&&1&3\\\hline\end{tabular}\end{adjustbox}
\caption{Total color matrix-part $1$}\label{Table b}\end{table}
\vspace{0.3cm}
\newpage
Here, the empty cells in the adjacency matrix do not receive any color in the total color matrix. Here, the least odd prime dividing $n$ is $3$, whence we give a $3$- vertex conformable coloring to the vertices. In the first row,  first entry is $1$ and  the second entry is given by $\lfloor\frac{3+1}{2}\rfloor+\frac{2-2}{2}+1=3$. This immediately gives us the entry of $n^{th}$ column to be $3-\frac{j-3}{2}$. The next row is just the first row shifted to the left, with the entry corresponding to the nonzero entries of the adjacency matrix. The cyclic pattern of the first row is $1-3-2-\ldots$. Correspondingly the cyclic pattern is adopted in the subsequent rows with first entries determined by the symmetry of the total color matrix. This gives us a total $3$ coloring of the subgraph consisting of all the vertices and one Hamiltonian cycle.\\
\vspace{0.3cm}
\pagebreak

The second part of the total color matrix is as given in the table \ref{Table c}.\\

\begin{table}[h]
\begin{adjustbox}{width=\columnwidth,center}
\begin{tabular}{|c|c|c|c|c|c|c|c|c|c|c|c|c|c|c|c|c|c|c|c|c|c|c|c|c|}\hline&0&1&2&3&4&5&6&7&8&9&10&11&12&13&14&15&16&17&18&19&20&21&22&23\\\hline
0&&&&&&4&&6&&&&8&&9&&&&7&&5&&&&\\\hline1&&&&&&&5&&7&&&&9&&8&&&&6&&4&&&\\\hline2&&&&&&&&4&&6&&&&8&&9&&&&7&&5&&\\\hline3&&&&&&&&&5&&7&&&&9&&8&&&&6&&4&\\\hline4&&&& & & &&&&4&&6&&&&8&&9&&&&7&&5 \\\hline5&4&&&& & & &&&&5&&7&&&&9&&8&&&&6&\\\hline6&&5&&&& & & &&&&4&&6&&&&8&&9&&&&7\\\hline7&6&&4&&&& & & &&&&5&&7&&&&9&&8&&&\\\hline8&&7&&5&&&& & & &&&&4&&6&&&&8&&9&&\\\hline9&&&6&&4&&&& & & &&&&5&&7&&&&9&&8&\\\hline10&&&&7&&5&&&& & & &&&&4&&6&&&&8&&9\\\hline11&8&&&&6&&4&&&& & & &&&&5&&7&&&&9&\\\hline12&&9&&&&7&&5&&&& & & &&&&4&&6&&&&8\\\hline13&9&&8&&&&6&&4&&&& & & &&&&5&&7&&&\\\hline14&&8&&9&&&&7&&5&&&& & & &&&&4&&6&&\\\hline15&&&9&&8&&&&6&&4&&&& & & &&&&5&&7&\\\hline16&&&&8&&9&&&&7&&5&&&& & & &&&&4&&6\\\hline17&7&&&&9&&8&&&&6&&4&&&& & & &&&&5&\\\hline18&&6&&&&8&&9&&&&7&&5&&&& & & &&&&4\\\hline19&5&&7&&&&9&&8&&&&6&&4&&&& & & &&&\\\hline20&&4&&6&&&&8&&9&&&&7&&5&&&& & & &&\\\hline21&&&5&&7&&&&9&&8&&&&6&&4&&&& & & &\\\hline22&&&&4&&6&&&&8&&9&&&&7&&5&&&& && \\\hline23&&&&&5&&7&&&&9&&8&&&&6&&4&&&&&\\\hline\end{tabular}\end{adjustbox}
\caption{ Total color matrix-part $2$}\label{Table c}\end{table}
\vspace{0.3cm}
\pagebreak

 Here, we have used $6$ extra colors to the remaining edges. The final total color matrix is shown in the  table \ref{Table d}.\\

\begin{table}[h]
\begin{adjustbox}{width=\columnwidth,center}
\begin{tabular}{|c|c|c|c|c|c|c|c|c|c|c|c|c|c|c|c|c|c|c|c|c|c|c|c|c|}\hline&0&1&2&3&4&5&6&7&8&9&10&11&12&13&14&15&16&17&18&19&20&21&22&23\\\hline
0&1&3&&&&4&&6&&&&8&&9&&&&7&&5&&&&2\\\hline1&3&2&1&&&&5&&7&&&&9&&8&&&&6&&4&&&\\\hline2&&1&3&2&&&&4&&6&&&&8&&9&&&&7&&5&&\\\hline3&&&2&1&3&&&&5&&7&&&&9&&8&&&&6&&4&\\\hline4&&&&3 &2 &1 &&&&4&&6&&&&8&&9&&&&7&&5 \\\hline5&4&&&&1 &3 &2 &&&&5&&7&&&&9&&8&&&&6&\\\hline6&&5&&&&2 &1 &3 &&&&4&&6&&&&8&&9&&&&7\\\hline7&6&&4&&&&3 &2 &1 &&&&5&&7&&&&9&&8&&&\\\hline8&&7&&5&&&&1 &3 &2 &&&&4&&6&&&&8&&9&&\\\hline9&&&6&&4&&&&2 &1 &3 &&&&5&&7&&&&9&&8&\\\hline10&&&&7&&5&&&&3 &2 &1 &&&&4&&6&&&&8&&9\\\hline11&8&&&&6&&4&&&& 1&3 &2 &&&&5&&7&&&&9&\\\hline12&&9&&&&7&&5&&&&2 &1 &3 &&&&4&&6&&&&8\\\hline13&9&&8&&&&6&&4&&&&3 &2 &1 &&&&5&&7&&&\\\hline14&&8&&9&&&&7&&5&&&&1 &3 &2 &&&&4&&6&&\\\hline15&&&9&&8&&&&6&&4&&&&2 &1 &3 &&&&5&&7&\\\hline16&&&&8&&9&&&&7&&5&&&&3 &2 &1 &&&&4&&6\\\hline17&7&&&&9&&8&&&&6&&4&&&&1 &3 &2 &&&&5&\\\hline18&&6&&&&8&&9&&&&7&&5&&&&2 &1 &3 &&&&4\\\hline19&5&&7&&&&9&&8&&&&6&&4&&&&3 &2 &1 &&&\\\hline20&&4&&6&&&&8&&9&&&&7&&5&&&&1 &3 &2 &&\\\hline21&&&5&&7&&&&9&&8&&&&6&&4&&&&2 &1 &3 &\\\hline22&&&&4&&6&&&&8&&9&&&&7&&5&&&&3 &2& 1\\\hline23&2&&&&5&&7&&&&9&&8&&&&6&&4&&&&1&3\\\hline\end{tabular}\end{adjustbox}
\caption{Final total color matrix}\label{Table d}\end{table}

\end{exmp}
\pagebreak

 A regular graph is said to have a divisible vertex coloring if its vertices can be given at most $\Delta+1$ colors such that each color class of vertices have the same cardinality and as well as have the same parity as the number of vertices. Note that for a circulant graph $G$ on $n$ vertices with generating set $S=\{s_i\}$,  $(|S|+1)|n$ and $|S|+1 (=\Delta+1)$ not dividing any $s_i$ implies that there exists a divisible vertex coloring of $G$ with $\Delta+1$ colors, as we can classify the vertices (vertices are labled as $0, 1, ..., \Delta, . . . , n-1$) as $(0,\Delta+1,2\Delta+2,\ldots, n-\Delta-1),(1,\Delta+2,2\Delta+3,\ldots,n-\Delta),\ldots,(\Delta,2\Delta+1,\ldots,n-1)$.
\begin{thm}
Consider an odd circulant graph $G$ with generating set $S=\{s_i\}$ such that $s_i$ is not divisible by $\Delta+1$, and any two $s_i,s_k$ are not congruent modulo $\Delta+1$. Then $G$ is of type I.
\end{thm}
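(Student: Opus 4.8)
The plan is to produce an explicit proper total coloring using only the $\Delta+1$ residues $\{0,1,\dots,\Delta\}$ modulo $\Delta+1$ as colors. Since $G$ has an odd number $n$ of vertices and (by the divisible-vertex-coloring set-up preceding the theorem) $(\Delta+1)\mid n$, the modulus $\Delta+1$ is an odd divisor of $n$; in particular $2$ is invertible modulo $\Delta+1$, and I write $\bar 2$ for its inverse. First I would record the consequence of the hypotheses: $S$ has exactly $\Delta$ elements, none divisible by $\Delta+1$ and pairwise incongruent modulo $\Delta+1$, so the residues $\{s_i \bmod (\Delta+1)\}$ are $\Delta$ distinct nonzero residues, i.e. they are precisely $\{1,2,\dots,\Delta\}$.

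For the vertices I would use the divisible vertex coloring, coloring vertex $v$ with $v \bmod (\Delta+1)$; this is proper since adjacent vertices differ by some $s_i\not\equiv 0$. The heart of the argument is the edge coloring: color the edge $\{v,v+s\}$ of difference $s\in S$ with $(v+\bar 2\,s)\bmod(\Delta+1)$. I would first check this is well defined, i.e. symmetric under the two descriptions of the edge: viewing the same edge from $v+s$ with difference $-s=n-s\in S$ gives $(v+s)+\bar 2(n-s)\equiv v+\bar 2\,s \pmod{\Delta+1}$, where the cancellation uses $\bar 2+\bar 2\equiv 1$ and, crucially, $(\Delta+1)\mid n$ (so that $\bar 2\,n\equiv 0$).

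It then remains to verify the three total-coloring conditions. Adjacency of vertices is already handled. Two edges meeting at $v$, with differences $s\neq s'$, get colors $v+\bar 2 s$ and $v+\bar 2 s'$, which differ because $\bar 2$ is a unit and $s\not\equiv s'\pmod{\Delta+1}$. Finally, and this is the key computation, the $\Delta$ edge colors at $v$ form the set $\{\,v+\bar 2 s : s\in S\,\}$; since $\bar 2$ permutes the nonzero residues and $\{s\bmod(\Delta+1)\}=\{1,\dots,\Delta\}$, this set equals $\{v+1,v+2,\dots,v+\Delta\}=\{0,\dots,\Delta\}\setminus\{v\}\pmod{\Delta+1}$. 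Thus at every vertex the incident edges use exactly the colors other than the vertex color $v$, so each closed star sees all $\Delta+1$ colors once. This is a proper total coloring with $\Delta+1$ colors, whence $G$ is type I.

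The main obstacle — really the design choice that makes everything click — is selecting the multiplier so that the single color absent from the edge palette at $v$ is exactly the vertex color $v$. A naive rule such as coloring $\{v,v+s\}$ by $(2v+s)$ is also symmetric but leaves $2v$ (not $v$) uncovered, which clashes with the vertex color at most vertices; replacing the multiplier by $\bar 2$ repairs this, and it is precisely here that oddness of $\Delta+1$ (equivalently of $n$) is indispensable, since $\bar 2$ must exist. The only other delicate point is the symmetric well-definedness of the edge rule, which is exactly what forces the standing assumption $(\Delta+1)\mid n$ inherited from the divisible-coloring framework.
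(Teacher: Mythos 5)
Your proposal is correct, and it reaches the result by a genuinely cleaner route than the paper, even though the two constructions turn out to produce essentially the same coloring. The paper works with the total color matrix: it seeds the first entry of each superdiagonal by a parity-split rule (odd columns get $2+\frac{j-3}{2}$, even columns get $\frac{\Delta+2}{2}+\frac{j-2}{2}+1$, taken modulo $\Delta+1$), cycles each diagonal in the pattern $1\hbox{-}2\hbox{-}\cdots\hbox{-}(\Delta+1)$, and then verifies non-clashing by solving congruences case by case, the seeding rule being imported from Alekseyev's pairing construction. You instead give the coloring in closed form: vertex $v$ receives $v \bmod (\Delta+1)$ and the edge $\{u,v\}$ receives $\overline{2}\,(u+v) \bmod (\Delta+1)$, after which well-definedness and all three properness conditions are one-line computations with the inverse of $2$. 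In fact, unwinding the paper's rules for the first row (column $j$ corresponds to generator $s=j-1$) gives $1+s/2$ for $s$ even and $\frac{\Delta+1+s}{2}+1$ for $s$ odd, which is exactly your $\overline{2}\,s$ shifted by the relabeling of colors $\{0,\dots,\Delta\}$ to $\{1,\dots,\Delta+1\}$; so you have isolated the algebraic kernel of the paper's matrix-filling. Your key added observation --- that $|S|=\Delta$ together with the two congruence hypotheses forces $\{s \bmod (\Delta+1) : s \in S\}$ to be all of $\{1,\dots,\Delta\}$ --- makes the vertex--edge non-clash immediate (every closed star sees each color exactly once), whereas the paper only argues that clashes would force $(\Delta+1)\mid(j-1)$; the trade-off is that this pigeonhole step is special to the situation where the generators exhaust the nonzero residues, so it does not transfer verbatim to the related Theorems 2.2 and 2.5, where only some diagonals are colored this way and the matrix viewpoint is what lets the remaining edges be handled by a separate bipartite edge coloring. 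One hygiene point, common to both proofs: the theorem statement never explicitly assumes $(\Delta+1)\mid n$, yet both arguments use it. You inherit it from the divisible-coloring setup, which is fair; it is worth noting it is actually forced by the hypotheses, since $S=-S$ and $n$ odd give $\sum_{s\in S}s=\frac{\Delta}{2}n$ as integers, while the residues $\{1,\dots,\Delta\}$ sum to $0$ modulo $\Delta+1$ and $\gcd\left(\frac{\Delta}{2},\Delta+1\right)=1$, so $(\Delta+1)\mid n$ follows and neither proof needs it as an extra assumption.
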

\begin{proof}
We fill the total color matrix by following a  procedure similar to the one used in Theorem $2.2$. The entries of the first row of the total color matrix is chosen by the following rule: (here $j$ is the column number of the color matrix with $j\le\lfloor\frac{n}{2}\rfloor$)\\
i) If $j=1$ then the entry in that column is $1$.\\
ii) If $j>1$ is odd, then the entry at the $j$-th column is $(2+\frac{j-3}{2})\pmod {\Delta+1}$. Note that this would force the entry  at the $n+2-j$-th column will be $(\Delta+1-\frac{j-3}{2})\pmod {\Delta+1}$.\\
iii) If $j$ is even, then the entry at the $j$-th column is $(\frac{\Delta+2}{2}+\frac{j-2}{2}+1)\pmod {\Delta+1}$. Note that this would force the entry at the $n+2-j$-th column will be $(\frac{\Delta+2}{2}-\frac{j-2}{2})\pmod {\Delta+1}$.\\

It is seen that the entries generated by the above rules give us distinct entries and each row and/or column has only one instance of each nonzero element. This is because the additional criterion that for any two generating elements of the graph $s_i$ and $s_k$, we have  $s_i\not\equiv s_k\pmod{\Delta+1}\implies j_i\not\equiv j_k\pmod{\Delta+1}$, thereby ensuring the distinctness of the starting points of each superdiagonal. Let $a_j$ be the entry at the $j$ th position. Then the colors so put on the edges (superdiagonals) clash with the vertex colors only when $a_j\equiv j\pmod {\Delta+1}$ or $a_{n+2-j}\equiv n+2-j\pmod {\Delta+1}$ respectively. Now, using the above rules, these amount to either $2+\frac{j-3}{2}\equiv j \pmod {\Delta+1}$ and $\Delta+1-\frac{j-3}{2}\equiv n+2-j\pmod {\Delta+1}$, when $j$ is odd; or $\lfloor\frac{\Delta+2}{2}\rfloor+\frac{j-2}{2}+1\equiv j\pmod {\Delta+1}$ and $\lfloor\frac{\Delta+2}{2}\rfloor-\frac{j-2}{2}n+2-j\pmod {\Delta+1}$. Solving these congruences using the fact that $(\Delta+1)|n$, we obtain the only case $j\equiv1\pmod {\Delta+1}$, which implies $(\Delta+1)|(j-1)$. However, we note that since  $j-1$ corresponds to a generating element of the graph, and  it is additionally given that no generating element of the graph is divisible by $\Delta+1$ the situation is not possible.\\  

We then start the diagonal in the pattern $1-2-3-\ldots-\Delta+1-1-2-\ldots-$. The entries of the superdiagonals start from the entries determined by the above rules and then follow a similar pattern as in the principal diagonal.  By symmetry the subdiagonals are also filled. This fills the diagonal elements with the colors corresponding to the above divisible $\Delta+1$ vertex coloring and a $\Delta+1$ edge coloring, which satisfy the total coloring condition. This gives us a $\Delta+1$ or a type I total coloring of the graph. 
\end{proof}
\begin{exmp}
Take the Cayley graph on the group $\mathbb{Z}_{21}$ corresponding to the generating set $S=\{1,3,4,17,18,20\}$. Here, $\Delta=6$, and it satisfies all the conditions of the previous theorem with the divisible vertex color classes given by (0,7,14), (1,8,15), (2,9,16), (3,10,17), (4,11,18), (5,12,19), (6,13,20), where the vertices are the integers from $0$ to $21$ inclusive. 
\end{exmp}

 \begin{thm}
 If $G$ is a circulant graph on $n$ vertices with $n=2(2k+1)$ and $\frac{n}{2} \le \Delta < n-1$  such that $\overline{G}$ is connected. Then $G$ is type I.
 \end{thm}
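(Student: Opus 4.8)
The plan is to realize a Type~I coloring through the same total-color-matrix construction used in Theorem~2.2 and Theorem~2.3, the new feature being that the conformable vertex coloring on the diagonal is read off from the complement $\overline{G}$. The underlying reformulation I would use is standard for regular graphs: a $\Delta$-regular graph is Type~I precisely when it admits a proper $(\Delta+1)$-edge-coloring in which, at every vertex, the unique missing color — assigned as that vertex's color — yields a proper vertex coloring (which is automatically conformable, since $n$ is even forces every missing-color class to have even cardinality). Thus the problem decouples into (i) choosing the diagonal entries so that they form a proper $(\Delta+1)$-vertex coloring, and (ii) filling the off-diagonal entries with a proper edge coloring that, at each vertex, avoids the diagonal color.

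For step (i) I would exploit the hypothesis that $\overline{G}$ is connected. As $\overline{G}$ is itself a circulant, it is a connected Cayley graph on the abelian group $\mathbb{Z}_n$ and is therefore Hamiltonian; since $n=2(2k+1)$ is even, a Hamilton cycle of $\overline{G}$ splits into a perfect matching $M\subseteq \overline{G}$. Each edge of $M$ joins two vertices \emph{non}-adjacent in $G$, so assigning a common color to the two ends of each of the $\frac{n}{2}$ edges of $M$ gives a proper vertex coloring of $G$ with $\frac{n}{2}$ color classes, each of even size $2$. Because $\Delta\ge \frac{n}{2}$ we have $\frac{n}{2}\le \Delta+1$, so this is a conformable coloring using at most $\Delta+1$ colors, which I place on the diagonal. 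The role of the connectivity of $\overline{G}$ is exactly to supply this matching and, at the same time, to prevent $G$ from being a complete-multipartite-type graph that could force Type~II.

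For step (ii) I would pass to the $(\Delta+1)$-regular graph $H=G\cup M$ of even order, where degree $\Delta+1>\frac{n}{2}$. The aim becomes a proper $(\Delta+1)$-edge-coloring of $H$ in which the edges of $M$ are \emph{rainbow} (receive distinct colors): restricting it to $E(G)$ then leaves, at each vertex $v$, exactly the color of its $M$-edge unused, which is precisely $v$'s diagonal color, so no edge clashes with its incident vertices, while the distinctness of the colors on $M$ keeps the diagonal coloring proper. When $\Delta$ is even one has $\frac{n}{2}\notin S$, so $M$ may be taken as the single difference class $\{i,i+\frac{n}{2}\}$; then $H$ is again a circulant, with connection set $S\cup\{\frac{n}{2}\}$, and the cyclic super/sub-diagonal filling of Theorem~2.2 adapts, the congruence ("clash") analysis there showing that an edge color meets the diagonal color only if $\Delta+1$ divides a difference $s$, which the hypotheses forbid.

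The hard part is step~(ii) when $\Delta$ is odd, i.e.\ $\frac{n}{2}\in S$ already lies in $G$: now no single difference of $\overline{G}$ is a perfect matching, so $M$ must be assembled from several differences, $H$ is no longer circulant, and the clean diagonal pattern is unavailable. Here I would argue directly that the rainbow precoloring of $M$ by $\frac{n}{2}$ distinct colors extends to a proper $(\Delta+1)$-edge-coloring of $H$, using that $H$ is regular of even order with degree $\Delta+1>\frac{n}{2}$ and that the connectivity of $\overline{G}$ stops a complete factor from splitting off; the density $\Delta\ge\frac{n}{2}$ together with the parity $n\equiv 2\pmod 4$ are exactly what forces the missing-color classes to be size-$2$ sets constituting a perfect matching of $\overline{G}$ rather than an arbitrary configuration. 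Establishing this constrained edge-coloring extension is the crux of the argument, and it is where I expect the main technical obstacle to lie.
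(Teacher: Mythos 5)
Your step (i) coincides with the paper's opening move: the paper also extracts a perfect matching $M$ of $\overline{G}$ (citing Godsil--Royle for connected vertex-transitive graphs of even order, rather than your Hamiltonicity argument) and uses the $\frac{n}{2}=2k+1$ independent pairs as a divisible vertex coloring of $G$. The gap is in step (ii). You reformulate Type I as the existence of a proper $(\Delta+1)$-edge-coloring of $G\cup M$ in which $M$ is rainbow, and this turns the theorem into a constrained edge-coloring extension problem that you do not solve: for $\Delta$ odd (i.e.\ $\frac{n}{2}\in S$) you explicitly concede that ``establishing this constrained edge-coloring extension is the crux,'' which is a statement of the difficulty, not an argument. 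Even your $\Delta$ even case leans on a claim that clashes are excluded because $\Delta+1$ dividing a difference ``the hypotheses forbid'' --- but this theorem, unlike Theorem 2.3, has \emph{no} divisibility or congruence hypotheses on the connection set $S$, so nothing of the sort is forbidden.

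The paper dissolves exactly this obstacle by refusing to make the whole edge coloring share the vertex palette. It extends the matching-based vertex coloring only over a $2k$-regular spanning circulant subgraph $H\subseteq G$ (a union of $k$ difference classes), totally colored with the $2k+1$ vertex colors via the superdiagonal matrix construction; there the clash congruence collapses to $j\equiv 1\pmod{2k+1}$, which is vacuous for $1<j\le\frac{n}{2}$. The subgraph $H$ is chosen so that $G-E(H)$ remains connected, which is possible because $\Delta\ge\frac{n}{2}$. Then $G-E(H)$ is a connected circulant graph of even order, hence $1$-factorizable by Stong's theorem (\cite{STO}), and its edges receive $\Delta-2k$ \emph{fresh} colors that never appear on any vertex, so no vertex--edge clash can arise on that part; the count $(2k+1)+(\Delta-2k)=\Delta+1$ finishes the proof. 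In short, the idea you are missing is the decomposition into $H$ and $G-E(H)$ with disjoint palettes together with the appeal to Stong's $1$-factorization theorem; once the palettes are separated, the rainbow-extension problem you identified as the main technical obstacle never has to be confronted.
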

 \begin{proof}
 Since $G$ is circulant, therefore $\overline{G}$ is also circulant. As $\overline{G}$ is connected, therefore $\overline{G}$  has a perfect matching $M$ as $n$ is even, by \cite{GOD}. Let $M=\{v_0v_1, v_2 v_3,\ldots, v_{n-1}v_n\}$. Thus, the pairs of vertices $\{v_0,v_1\},\{v_2,v_3\},\ldots,\{v_{n-1},v_n\}$ are independent  in $G$,  and we give a $\frac{n}{2}=2k+1$ colors to these independent sets of vertices, which is a divisible vertex coloring. This vertex coloring can be extended to a partial total coloring of $G$, say the total coloring of a sub graph $H$, where  $H$ is  so chosen that $G-E(H)$  is connected (which can always be done as the graph has degree $\ge\frac{n}{2}$). 
 \\
 
 This  total coloring of $H$ (the partial total coloring of $G$) can be done by a procedure similar to that in the previous Theorem 2.3. For that the $2k+1$  entries corresponding to the first row of $H$ are chosen by the following rules (here $j$ is the column number of the total color matrix, $j\le\frac{n}{2}$): \\
 i) If $j=1$ then the entry in that column is $1$.\\
 ii) If $j$ is odd, then the entry at the $j$-th column is $(2+\frac{j-3}{2})\mod ({2k+1})$. \\
 Note that this would force the entry  at the $n+2-j$-th superdiagonal to be $(2k+1-\frac{j-3}{2})\mod ({2k+1})$.\\
 iii) If $j$ is even, then the entry at the $j$-th column is $(\frac{2k+2}{2}+\frac{j-2}{2}+1)\mod ({2k+1})$. \\
 Note that this would force the entry at the $n+2-j$-th column to be $(\frac{2k+2}{2}-\frac{j-2}{2})\mod ({2k+1})$.\\

 In this case, as in the previous theorem, the distinctness of the starting entries of the subdiagonals are ensured as $j\le\frac{n}{2}$ and no two column numbers are congruent modulo $2k+1$.
 By using the above entries as a starting entry, we could fill the chosen $2k$ superdiagonals as in the previous theorem. The entries of the superdiagonals (corresponding to the colors of the edges) clash with the vertex colors only when $a_j\equiv j\mod ({2k+1})$ or $a_{n+2-j}\equiv (n+2-j)\mod ({2k+1})$ respectively. Now, using the above rules, these amount to either $(2+\frac{j-3}{2})\equiv j \mod ({2k+1})$ and $(2k+1-\frac{j-3}{2})\equiv (n+2-j)\mod ({2k+1})$, when $j$ is odd; or $(\frac{2k+2}{2}+\frac{j-2}{2}+1)\equiv j\mod ({2k+1})$ and $(\frac{2k+2}{2}-\frac{j-2}{2})\equiv (n+2-j)\mod ({2k+1})$. Solving these congruences using the fact that $(2k+1)|n$, we obtain the only case $j\equiv 1\mod ({2k+1})$, which implies $(2k+1)|(j-1)$. However, we note that since $2k+1=\frac{n}{2}$ and $j$ ranges from $1$ to $\frac{n}{2}$, such a condition does not occur. Hence, the non-clashing of the edge colors with the vertex colors is thereby satisfied.\\

 The remaining $\frac{\Delta}{2}-k$ non-zero super (sub)  diagonals  can be easily filled  with $\Delta-2k$ colors, as it corresponds to an edge coloring of the graph $G-E(H)$, which is of class I by \cite{STO}. Thus, we used  $(\Delta-2k)+(2k+1)=\Delta+1$, to color $G$. Hence $G$  is type I.
 \end{proof}
Since  we can fill the entries of the color matrix of the graphs in the above four theorems in a linear number of steps compared to the input, the algorithm used to give the total coloring is polynomial time in all the above cases.
\begin{thm}
If $G$ be a non-complete Cayley graph of odd order $n$ and clique number $\omega>\frac{n}{3}$. Then, $G$ is not of type I.
\end{thm}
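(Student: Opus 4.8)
The plan is to prove that $G$ cannot be \emph{conformable} and then invoke the standard fact that every type~I graph is conformable. Recall that a regular graph is conformable precisely when it admits a proper $(\Delta+1)$-vertex-coloring in which every color class has cardinality of the same parity as $n=|V(G)|$. First I would justify the implication ``type~I $\Rightarrow$ conformable'' directly in the regular case: in a total coloring of a $\Delta$-regular graph with $\Delta+1$ colors, each vertex $v$ together with its $\Delta$ incident edges receives $\Delta+1$ distinct colors, so \emph{every} color appears at every vertex, either on $v$ or on exactly one edge at $v$. Hence, fixing a color $c$, the set $V_c$ of vertices colored $c$ and the matching $M_c$ of edges colored $c$ satisfy $n=|V_c|+2|M_c|$, so $|V_c|\equiv n\pmod 2$. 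The vertex classes $\{V_c\}$ therefore form a conformable coloring, and it suffices to show no such coloring can exist.

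Next I would exploit that $n$ is odd. If $G$ were conformable, every color class $V_c$ would have odd cardinality, and in particular would be nonempty; thus there are exactly $\Delta+1$ classes, each of odd size, partitioning the $n$ vertices. The crucial structural input is now the clique hypothesis. Since $G$ is a Cayley graph it is vertex-transitive, and for vertex-transitive graphs one has the inequality $\alpha(G)\,\omega(G)\le n$. I would prove this by the usual averaging argument over a transitive automorphism group $\Gamma$: for a maximum clique $Q$ and a maximum independent set $I$, each translate $gQ$ is a clique and so meets $I$ in at most one vertex, giving $\sum_{g\in\Gamma}|gQ\cap I|\le|\Gamma|$; evaluating the same sum by transitivity (for fixed $q\in Q$ the image $gq$ lands in $I$ for exactly $|I|\,|\Gamma|/n$ group elements) yields $|Q|\,|I|\,|\Gamma|/n$, whence $\alpha\,\omega\le n$. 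Combined with $\omega>\tfrac{n}{3}$ this forces $\alpha<3$, i.e. $\alpha(G)\le 2$.

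Finally I would close by counting. Every color class is an independent set, hence of size at most $\alpha\le 2$; being also of odd size, each class has size exactly $1$. Summing over the $\Delta+1$ classes gives $n=\Delta+1$, which forces $G=K_n$ and contradicts the hypothesis that $G$ is non-complete. Therefore $G$ is not conformable, and consequently not of type~I. The parity/covering computation and the final count are routine; the step I expect to carry the real weight is the vertex-transitive bound $\alpha\,\omega\le n$, since it is exactly what converts the clique hypothesis $\omega>\tfrac{n}{3}$ into the decisive constraint $\alpha\le 2$.
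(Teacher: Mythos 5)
Your proposal is correct, and its overall strategy is the same as the paper's: show that $G$ admits no $(\Delta+1)$-conformable coloring, using the parity constraint forced by odd $n$ together with an upper bound on the size of a color class, and conclude that type~I is impossible. The difference lies in how the two arguments justify the decisive step that no color class can contain more than two vertices. The paper asserts this directly from $\omega>\frac{n}{3}$ with only the parenthetical remark that otherwise ``there might be two adjacent vertices in the same color class,'' which is not a proof; it also spends a paragraph counting the $n$ maximal cliques of $G$, a fact that is never actually used in the conformability argument. You instead derive the bound $\alpha(G)\le 2$ cleanly from the clique--coclique inequality $\alpha(G)\,\omega(G)\le n$ for vertex-transitive graphs, proved by the standard averaging argument over translates of a maximum clique, and you also supply the (usually cited, here proved) implication that a type~I regular graph is conformable via the identity $n=|V_c|+2|M_c|$. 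So your route is the same in outline but is tighter at exactly the point where the paper is loosest: the clique hypothesis enters your argument only through $\alpha\,\omega\le n$, which is the correct mechanism, whereas the paper's version leaves that conversion implicit. Your closing step ($|V_c|=1$ for all classes forces $n=\Delta+1$, i.e.\ $G$ complete, a contradiction) matches the paper's ending.
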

\begin{proof}
It is easy to see that a Cayley graph with clique size $\omega>\frac{n}{3}$ will not have an isolated maximal clique if it is not complete.   On the other hand, the graph would have exactly $n$ number of maximal cliques. This is seen as follows: Let $\{g_i\}$ be the elements of the group underlying $G$ and $\{s_1,. . . , s_k\}$ the elements of the generating set of $G$. Suppose $g_1-g_1s_1-g_1s_1s_2-\ldots-g_1s_1s_2\cdots s_m-g_1, \  m \leq k,$  be the sequence of vertices of a clique. Then, $g_2-g_2s_1-g_2s_1s_2-\ldots-g_2s_1s_2\cdots s_m-g_2, \  m \leq k,$ are also the vertices of a clique for any $g_1,g_2$. Since this sequence of vertices can be written for any element $g_i$ and there are $n$ elements in the group, hence we have $n$ different maximal cliques. Note that the cliques can intersect and may share some edges as one or more vertices in the sequences can be the same.\\

Now, for a type I total coloring, the graph should have a $\Delta+1$ conformable coloring, which implies, by the regularity of $G$, that each color class should have the same parity as that of $n$, which is odd. Since the clique size $\omega >\frac{n}{3}$, we cannot put more than two  vertices in any color class (for otherwise, there might be two adjacent vertices in the same color class). The only way the graph can have a conformable coloring is that each color class have exactly one vertex, but such a coloring would exceed $\Delta+1$ colors as the graph is not complete. Thus, the graph does not have a $\Delta+1$ conformable coloring, whence the graph is not type I.  
\end{proof}
\begin{thm}
Let $G$ be is a perfect Cayley graph with $\chi(G)|n$ and $\chi(G)$ being odd. Then $G$ satisfies TCC.
\end{thm}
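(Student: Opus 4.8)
The plan is to reuse the palette of a proper vertex colouring on a spanning family of maximum cliques, and to pay for the remaining edges with a disjoint palette. First I would record the structural consequences of the hypotheses. Since $G$ is perfect, $\chi(G)=\omega(G)=:\chi$, and by the (weak) Perfect Graph Theorem $\overline{G}$ is perfect, so the clique-cover number $\theta(G)$ equals $\alpha(G)$. Because $G$ is a Cayley graph it is vertex-transitive, so the clique--coclique bound $\alpha(G)\,\omega(G)\le n$ holds. Combining $\theta(G)\ge n/\omega(G)$ with $\theta(G)=\alpha(G)\le n/\omega(G)=n/\chi$ together with the assumption $\chi\mid n$ squeezes the chain $n/\chi\le\theta(G)=\alpha(G)\le n/\chi$ into equalities, so $\theta(G)=\alpha(G)=n/\chi$. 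Taking a minimum clique cover and thinning overlaps (a subset of a clique is a clique) yields a vertex-partition into exactly $n/\chi$ cliques; since they cover $n$ vertices and each has size at most $\omega=\chi$, every one has size exactly $\chi$. Call this clique factor $\mathcal{Q}=\{Q_1,\dots,Q_{n/\chi}\}$, so each $Q_t\cong K_\chi$.

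Next I would colour in two disjoint palettes. Fix a proper $\chi$-colouring of $G$ with colours $\{1,\dots,\chi\}\cong\mathbb{Z}_\chi$; restricted to any $Q_t$ it is a bijection onto all $\chi$ colours. Since $\chi$ is odd, $K_\chi$ is of type I, and I would realise this explicitly and compatibly with the prescribed vertex colours: if $u,v\in Q_t$ carry vertex-colours $\pi(u),\pi(v)$, colour the edge $uv$ with $2^{-1}\bigl(\pi(u)+\pi(v)\bigr)\bmod\chi$, which is well defined because $2$ is invertible modulo the odd number $\chi$. A direct check (two edges at a common vertex differ, and an edge differs from each end-vertex) shows this is a total colouring of each $Q_t$ inside the palette $\{1,\dots,\chi\}$; and because the cliques of $\mathcal{Q}$ are pairwise vertex-disjoint, the same palette may be used on every $Q_t$ simultaneously with no clash between within-clique edges of different cliques.

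It then remains to colour the edges of $G':=G-E(\mathcal{Q})$, the edges running between distinct cliques. Each vertex loses exactly its $\chi-1$ clique edges, so $G'$ is regular of degree $\Delta-\chi+1$; by Vizing's theorem it is properly edge-colourable with at most $\Delta-\chi+2$ colours, which I would draw from a fresh palette disjoint from $\{1,\dots,\chi\}$. Disjointness of the two palettes means every between-clique edge automatically avoids the colours of its end-vertices and of the within-clique edges at those vertices, so the only surviving requirement is that adjacent between-clique edges differ, i.e. properness of the colouring of $G'$. The total number of colours is therefore at most $\chi+(\Delta-\chi+2)=\Delta+2$, which is the TCC bound.

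The main obstacle is the first step: producing the clique factor into copies of $K_\chi$. Everything downstream is clean two-palette bookkeeping, but the factor rests on forcing the chain $n/\chi\le\theta(G)=\alpha(G)\le n/\chi$ to collapse to equalities, and this is exactly where perfection (for $\chi=\omega$ and $\theta=\alpha$), vertex-transitivity (for the clique--coclique inequality) and the divisibility $\chi\mid n$ are each needed; one must also check that a minimum clique cover can be taken as a vertex-partition so that the count forces full-size cliques. The oddness of $\chi$ is the other indispensable ingredient, entering only through the type-I total colouring of $K_\chi$: for even $\chi$ each clique would cost $\chi+1$ colours and the estimate would degrade to $\Delta+3$.
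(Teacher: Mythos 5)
Your proof is correct, and the colouring phase is essentially the paper's: totally colour a family of $n/\chi$ pairwise vertex-disjoint $\chi$-cliques using only the $\chi$ vertex colours (possible because $\chi$ is odd, so $K_\chi$ is type I), then edge-colour the leftover $(\Delta-\chi+1)$-regular graph with a fresh palette of $\Delta-\chi+2$ colours via Vizing, giving $\chi+(\Delta-\chi+2)=\Delta+2$ in all. Where you genuinely diverge is the structural step producing the clique factor. The paper gets it by counting: it observes (as in its preceding theorem) that group translates give $n$ maximal cliques, asserts that any maximal independent set has at most $n/\omega$ vertices, and then invokes a result on CIS graphs (Dobson et al.) that every maximal clique meets every maximal independent set, concluding that there are $n/\omega$ vertex-disjoint maximal cliques. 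You instead combine the weak perfect graph theorem (so $\overline{G}$ is perfect and $\theta(G)=\alpha(G)$), the clique--coclique bound $\alpha(G)\,\omega(G)\le n$ for vertex-transitive graphs, and the trivial bound $\theta(G)\ge n/\omega(G)$ to squeeze $\theta(G)=\alpha(G)=n/\chi$, after which thinning a minimum clique cover into a vertex-partition forces every part to be a clique of size exactly $\chi$. Your route buys rigour: the paper's appeal to the CIS intersection property is delicate (that property is not automatic for vertex-transitive or perfect graphs), whereas your chain rests only on standard unconditional facts, and it handles the complete case uniformly rather than as a separate triviality. Your explicit within-clique edge colour $2^{-1}\bigl(\pi(u)+\pi(v)\bigr)\bmod\chi$ also makes concrete what the paper leaves implicit, namely that the type-I total colouring of $K_\chi$ can be chosen to extend the prescribed proper vertex colouring.
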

\begin{proof}
Since $G$ is perfect, therefore the clique number is also equal to the chromatic number, which is $\chi$, an odd integer. Now, the case when the graph is complete is trivial. When the graph is not complete, as in the previous theorem, the graph will not have an isolated clique, rather has $\frac{n}{\chi}$ disjoint maximal cliques. The reason for this is as follows: Since we have $n$ vertices and the clique number is $\omega$, we have $n$ different $\omega$-cliques (of course several intersecting). Hence, any maximal independent set would contain at most $\frac{n}{\omega}$ vertices in it.   By \cite{VER}, we have that every maximal clique intersects every maximal independent set. Therefore each disjoint maximal independent set of vertices has one vertex of each of a maximal clique. Thus, there are $\frac{n}{\omega}$ vertex disjoint maximal cliques. In fact, the vertices would be distributed equitably in $\omega$ independent sets in a $\omega$ coloring.\\

Now, we proceed to coloring. We first  color the vertices of $G$ with $\chi$ colors. Using the same $\chi$ colors, we can color the edges of the disjoint cliques such that each of the disjoint cliques are colored totally. Now, let the union of $\frac{n}{\chi}$ cliques be the graph $G'$. Consider the graph $G-E(G')$. We observe that $G-E(G')$ is a $\Delta-\chi+1$-regular graph, hence requires at most $\Delta-\chi+2$ extra colors to color the edges. Thus, the total number of colors required in a total coloring of $G$ is at most  $\chi+(\Delta-\chi+2)=\Delta+2$ colors, which shows that $G$ satisfies TCC.
\end{proof}
Note that if $G-E(G')$ be a class I graph then the $G$ would be a type I graph.


\begin{thebibliography}{99}

\bibitem{ALE}Max Alekseyev (https://mathoverflow.net/users/7076/max-alekseyev), Constructing a vector consisting of nonnegative entries, URL (version: 2020-02-10):https://mathoverflow.net/q/352401
\bibitem{BER}Bermond, Jean‐Claude. "Nombre Chromatique Total Du Graphe R‐Parti Complet." Journal of the London Mathematical Society 2.2 (1974): 279-285.
\bibitem{BEZ} M. Behzad, Graphs and their chromatic numbers, Doctoral Thesis, Michigan  State University,  1965.
\bibitem{GNS} J. Geetha, N. Narayanan and K. Somasundaram, Total coloring- A Survey, https://arxiv.org/abs/1812.05833.
\bibitem{GOD}Godsil, Chris, and Gordon F. Royle. Algebraic graph theory. Vol. 207. Springer Science and Business Media, 2013.
\bibitem{STO}Stong, Richard A. "On 1-factorizability of Cayley graphs." Journal of Combinatorial Theory, Series B 39.3 (1985): 298-307.
\bibitem{VER}Dobson, Edward, et al. "Vertex-transitive CIS graphs." European Journal of Combinatorics 44 (2015): 87-98.
\bibitem{VGV} V. G. Vizing, Some unsolved problems in graph theory (in Russian),UspekhiMat.  Nauk.(23) 117–134; English translation in Russian  Math.  Surveys23(1968), 125–141.

\end{thebibliography}
\end{document}